\numberwithin{equation}{section}
\newtheorem{theorem}{Theorem}
\newtheorem{corollary}[theorem]{Corollary}
\newtheorem{proposition}[subsection]{Proposition}
\newtheorem{example}[theorem]{Example}
\begin{document}

\title{Almost Contact Metric Structures Induced by $G_2$ Structures}
\begin{titlepage}
\author{N\"{u}lifer \"{O}zdemir\footnote{E.mail:
nozdemir@anadolu.edu.tr} \\
{\small Department of Mathematics, Anadolu University, 26470 Eski\c{s}ehir, Turkey} \\ \\
Mehmet Solgun\footnote{E.mail:
mehmet.solgun@bilecik.edu.tr}\\{\small Department of Mathematics,
Bilecik Seyh Edebali University, 11210 Bilecik, Turkey} \\ \\
\c{S}irin Aktay\footnote{E.mail:
sirins@anadolu.edu.tr} \\
{\small Department of Mathematics, Anadolu University, 26470 Eski\c{s}ehir, Turkey}}

\date{ }

\maketitle

\bigskip

\begin{abstract}
\noindent \noindent We study almost contact metric structures induced by 2-fold vector cross products on manifolds with $G_2$ structures. We get some results on possible classes of almost contact metric structures. Finally we give examples.
\vspace{1cm}

\noindent
\end{abstract}

\let\thefootnote\relax\footnotetext{This study was supported by Anadolu
University Scientific Research Projects Commission under the grant no: 1501F017.}

\end{titlepage}

\section{Introduction}
\newenvironment{proof}[1][Proof]{\noindent\textbf{#1} }{\ \rule{0.5em}{0.5em}} A recent research area in geometry is the relation between manifolds with structure group $G_2$ and almost contact metric manifolds. A manifold with $G_2$ structure has a 3-form globally defined on its tangent bundle with some properties. Such manifolds are classified into sixteen classes by Fern\'{a}ndez and Gray in \cite{FERNANDEZ} according to the properties of the covariant derivative of the 3-form.

On an almost contact metric manifold, there exists a global 2-form and the properties of the covariant derivative of this 2-form yields $2^{12}$ classes of almost contact metric manifolds, see \cite{ALEXIEV, CHINEA}.

Recently Matzeu and Munteanu constructed almost contact metric structure induced by the 2-fold vector cross product on some classes of manifolds with $G_2$ structures \cite{MATZEU}. Arikan et.al. proved the existence of almost contact metric structures on manifolds with $G_2$ structures \cite{ARIKAN}. Todd studied almost contact metric structures on manifolds with parallel $G_2$ structures \cite{TODD}.

Our aim is to study almost contact metric structures on manifolds with arbitrary $G_2$ structures. We eliminate some classes that almost contact metric structure induced from a $G_2$ structure may belong to according to properties of characteristic vector field of the almost contact metric structure. In particular, we also investigate the possible classes of almost contact metric structures on manifolds with nearly parallel $G_2$ structures. In addition, we give examples of almost contact metric structures on manifolds with $G_2$ structures induced by the 2-fold vector cross product.

\section{Preliminaries}

Consider $\mathbb{R}^7$ with the standard basis $\{e_1,..., e_7\}$. The fundamental 3-form on $\mathbb{R}^7$ is defined as
$$
\varphi_0=e^{123}+e^{145}+e^{167}+e^{246}-e^{257}-e^{347}-e^{356}
$$
where $\{e^1,..., e^7\}$ is the dual basis of the standard basis and $e^{ijk}=e^{i}\wedge e^{j}\wedge e^{k}$. Then compact, simple and simply connected 14-dimensional Lie group $G_2$ is
$$G_2:=\{ f\in GL(7, \mathbb{R}) \ | \ f^{*}\varphi_0=\varphi_0 \}.$$
A manifold with $G_2$ structure is a $7$-dimensional oriented manifold whose structure group reduces to the group $G_2$. In this case, there exists a global 3-form $\varphi$ on $M$ such that for all $p\in M$, $(T_pM,\varphi_p)\cong (\mathbb{R}^7,\varphi_0)$. This 3-form is called the fundamental 3-form or the $G_2$ structure on $M$ and gives a Riemannian metric $g$, a volume form and a 2-fold vector cross product $P$ on $M$ defined by $\varphi(x,y,z)=g(P(x,y),z)$ for all vector fields $x, y$ on $M$ \cite{BRY}.

Manifolds $(M,g)$ with $G_2$ structure $\varphi$ were classified according to properties of the covariant derivative of the fundamental 3-form. The space
$$\mathcal{W}=\{\alpha\in(\mathbb{R}^7)^*\otimes\Lambda^3(\mathbb{R}^7)^* | \alpha(x,y\wedge z\wedge P(y,z))=0\ \  \forall x,y,z\in \mathbb{R}^7\}$$ of of tensors having the same symmetry properties as the covariant derivative of $\varphi$ was written, and then this space was decomposed into four $G_2$-irreducible subspaces using the representation of the group $G_2$ on $\mathcal{W}$. Since
$$(\nabla\varphi)\!_p\!\!\in\!\! \mathcal{W}_p\!\!=\!\!\{\alpha\!\!\in\!\! T^*_pM\otimes\Lambda^3(T^*_pM) | \alpha(x,y\!\wedge z\!\wedge P(y,z))\!=\!0\ \  \forall x,y,z\!\!\in\!\! T_pM\}$$
and there are 16 invariant subspaces of $\mathcal{W}_p$, each subspace corresponds to a different class of manifolds with $G_2$ structure. For example, the class $\mathcal{P}$, in which the covariant derivative of $\varphi$ is zero, is the class of manifolds with parallel $G_2$ structure. A manifold which is in this class is sometimes called a $G_2$ manifold. $\mathcal{W}_1$ corresponds to the class of nearly parallel manifolds, which are manifolds with $G_2$ structure $\varphi$ satisfying $d\varphi=k\ast\varphi$ for some constant $k$ \cite{FERNANDEZ}.

Let $M^{2n+1}$ be a differentiable manifold of dimension $2n+1$. If there is a $(1,1)$ tensor field $\phi$, a vector field $\xi$ and a 1-form $\eta$ on $M$ satisfying
$$\phi^2=-I+\eta\otimes\xi, \qquad \eta(\xi)=1,$$
then $M$ is said to have an almost contact structure $(\phi, \xi, \eta)$. A manifold with an almost contact structure is called an almost contact manifold.

If in addition to an almost contact structure $(\phi, \xi, \eta)$, $M$ also admits a Riemannian metric $g$ such that
$$g(\phi(x),\phi(y))=g(x,y)-\eta(x)\eta(y)$$
for all vector fields $x, y$, then $M$ is an almost contact metric manifold with the almost contact metric structure $(\phi, \xi, \eta, g)$. The Riemannian metric $g$ is called a compatible metric. The 2-form $\Phi$ defined by $$\Phi(x,y)=g(x,\phi(y))$$ for all $x,y\in\Gamma(TM)$ is called the fundamental 2-form of the almost contact metric manifold $(M, \phi, \xi, \eta, g)$.

In \cite{CHINEA}, a classification of almost contact metric manifolds was obtained via the study of the covariant derivative of the fundamental 2-form. Let $(\xi,\eta,g)$ be an almost contact metric structure on $\mathbb{R}^{2n+1}$. A space
$$\begin{array}{rcl}\mathcal{C}&=&\left\{\alpha\in\otimes^0_3\mathbb{R}^{2n+1} | \alpha(x,y,z)=-\alpha(x,z,y)=-\alpha(x,\phi y,\phi z)\right.\\
\\
&&\left.\ \ \ \ \ \ \ \ \ \ \ \ \ \ \ \ \ \ \ \ \ +\eta(y)\alpha(x,\xi,z)+\eta(z)\alpha(x,y,\xi)\right\}
\end{array}$$
having the same symmetries as the covariant derivative of the fundamental 2-form was given. First this space was written as a direct sum of three subspaces
$$\mathcal{D}_1=\{\alpha\in\mathcal{C} | \alpha(\xi,x,y)=\alpha(x,\xi,y)=0\},$$
$$\mathcal{D}_2=\{\alpha\in\mathcal{C} | \alpha(x,y,z)=\eta(x)\alpha(\xi,y,z)+\eta(y)\alpha(x,\xi,z)+\eta(z)\alpha(x,y,\xi)\}$$
and
$$\mathcal{C}_{12}=\{\alpha\in\mathcal{C} | \alpha(x,y,z)=\eta(x)\eta(y)\alpha(\xi,\xi,z)+\eta(x)\eta(z)\alpha(\xi,y,\xi)\}$$ and then, $\mathcal{D}_1$, $\mathcal{D}_2$ were decomposed into $U(n)\times 1$ irreducible components $\mathcal{C}_1,\ldots,\mathcal{C}_{4}$ and $\mathcal{C}_5,\ldots,\mathcal{C}_{11}$, respectively. Thus there are $2^{12}$ invariant subspaces, denoted by $\mathcal{C}_1,\ldots,\mathcal{C}_{12}$, each corresponding to a class of almost contact metric manifolds. For example, the trivial class such that $\nabla\Phi=0$ corresponds to the class of cosymplectic \cite{BLAIR} (called co-K{\"a}hler by some authors) manifolds, $\mathcal{C}_1$ is the class of nearly-K-cosymplectic manifolds, etc.

In the classification of Chinea and Gonzales, it was shown that the space of quadratic invariants of $\mathcal{C}$ is generated by the following 18 elements:
\\

 \begin{tabular}{ll}
 $i_1(\alpha) =  \sum\limits_{i,j,k}\alpha(e_i,e_j,e_k)^2$ & $i_2(\alpha)=\sum\limits_{i,j,k}\alpha(e_i,e_j,e_k)\alpha(e_j,e_i,e_k)$ \\
 $i_3(\alpha) =  \sum\limits_{i,j,k}\alpha(e_i,e_j,e_k)\alpha(\phi e_i,\phi e_j, e_k)$ & $i_4(\alpha)=\sum\limits_{i,j,k}\alpha(e_i,e_i,e_k)\alpha(e_j,e_j,e_k)$ \\
 $i_5(\alpha) =  \sum\limits_{j,k}\alpha(\xi,e_j,e_k)^2$ & $i_6(\alpha) =  \sum\limits_{i,k}\alpha(e_i,\xi,e_k)^2$ \\
$i_7(\alpha)=\sum\limits_{j,k}\alpha(\xi,e_j,e_k)\alpha(e_j,\xi,e_k)$ & $i_8(\alpha)=\sum\limits_{i,j}\alpha(e_i,e_j,\xi)\alpha(e_j,e_i,\xi)$ \\
$i_9(\alpha) =  \sum\limits_{i,j}\alpha(e_i,e_j,\xi)\alpha(\phi e_i,\phi e_j, \xi)$ & $i_{10}(\alpha)=\sum\limits_{i,j}\alpha(e_i,e_i,\xi)\alpha(e_j,e_j,\xi)$ \\
$i_{11}(\alpha) =  \sum\limits_{i,j}\alpha(e_i,e_j,\xi)\alpha(e_j,\phi e_i,\xi)$ & $i_{12}(\alpha) =  \sum\limits_{i,j}\alpha(e_i,e_j,\xi)\alpha(\phi e_j,\phi e_i,\xi)$ \\
 \end{tabular}
 \begin{tabular}{ll}
$i_{13}(\alpha)=\sum\limits_{j,k}\alpha(\xi,e_j,e_k)\alpha(\phi e_j,\xi,e_k)$ & $i_{14}(\alpha)=\sum\limits_{i,j}\alpha(e_i,\phi e_i,\xi)\alpha(e_j,\phi e_j,\xi)$ \\
 $i_{15}(\alpha)=\sum\limits_{i,j}\alpha(e_i,\phi e_i,\xi)\alpha(e_j,e_j,\xi)$ & $i_{16}(\alpha) =  \sum\limits_{k}\alpha(\xi,\xi,e_k)^2$ \\
 $i_{17}(\alpha) =  \sum\limits_{i,k}\alpha(e_i,e_i,e_k)\alpha(\xi,\xi,e_k)$ &  $i_{18}(\alpha) =  \sum\limits_{i,k}\alpha(e_i,e_i,\phi e_k)\alpha(\xi,\xi,e_k)$
 \end{tabular}
\\
 where $\lbrace e_1,e_2,...,e_6,\xi \rbrace $ is a local orthonormal basis. Also following relations among quadratic invariants were expressed for manifolds having dimensions $\geq 7$, where $\alpha\in\mathcal{C}$ and $A=\{1,2,3,4,5,7,11,13,15,16,17,18\}$:
\\
$\mathbf{\mathcal{C}_1:} i_1(\alpha)=-i_2(\alpha)=-i_3(\alpha)=||\alpha ||^2;  \ \ i_m(\alpha)=0\ (m\geq 4)$\\
$\mathbf{\mathcal{C}_2:} i_1(\alpha)=2i_2(\alpha)=-i_3(\alpha)=||\alpha ||^2; \quad i_m(\alpha)=0\ (m\geq 4)$\\
$\mathbf{\mathcal{C}_3:} i_1(\alpha)=i_3(\alpha)=||\alpha ||^2; \quad i_2(\alpha)=i_m(\alpha)=0\ (m\geq 4)$\\
$\mathbf{\mathcal{C}_4:} i_1(\alpha)=i_3(\alpha)=\frac{n}{(n-1)^2}i_4(\alpha)=\frac{n}{(n-1)^2}\sum\limits_k^{2n}c_{12}^2(\alpha)(e_k)$;\\
$i_2(\alpha)=i_m(\alpha)=0\ (m> 4) $\\
$\mathbf{\mathcal{C}_5:} i_6(\alpha)=-i_8(\alpha)=i_9(\alpha)=-i_{12}(\alpha)=\frac{1}{2n}i_{14}(\alpha)$;\\
$i_{10}(\alpha)=i_m(\alpha)=0 \ \  (m\in A) $\\
$\mathbf{\mathcal{C}_6:} i_6(\alpha)=i_8(\alpha)=i_9(\alpha)=i_{12}(\alpha)=\frac{1}{2n}i_{10}(\alpha)$;\\
$i_{14}(\alpha)=i_m(\alpha)=0 \ \ (m\in A)  $\\
$\mathbf{\mathcal{C}_7:} i_6(\alpha)=i_8(\alpha)=i_9(\alpha)=-i_{12}(\alpha)=\frac{||\alpha ||^2}{2}$;\\
$i_{10}(\alpha)=i_{14}(\alpha)=i_m(\alpha)=0 \ \ (m\in A)  $\\
$\mathbf{\mathcal{C}_8:} i_6(\alpha)=-i_8(\alpha)=i_9(\alpha)=-i_{12}(\alpha)=\frac{||\alpha ||^2}{2}$;\\
$i_{10}(\alpha)=i_{14}(\alpha)=i_m(\alpha)=0\ \ (m\in A) $\\
$\mathbf{\mathcal{C}_9:} i_6(\alpha)=i_8(\alpha)=-i_9(\alpha)=-i_{12}(\alpha)=\frac{||\alpha ||^2}{2}$;\\
$i_{10}(\alpha)=i_{14}(\alpha)=i_m(\alpha)=0\ \  (m\in A) $ \\
$\mathbf{\mathcal{C}_{10}:} i_6(\alpha)=-i_8(\alpha)=-i_9(\alpha)=i_{12}(\alpha)=\frac{||\alpha ||^2}{2}$;\\  $i_{10}(\alpha)=i_{14}(\alpha)=i_m(\alpha)=0 \ \ (m\in A)$ \\
$\mathbf{\mathcal{C}_{11}:} i_5(\alpha)=||\alpha||^2;  i_m(\alpha)=0\ \ (m\neq 5)$ \\
$\mathbf{\mathcal{C}_{12}:} i_{16}(\alpha)=||\alpha||^2;  i_m(\alpha)=0\ \ (m\neq 16)$ \\

For details, refer to \cite{CHINEA}.

We give below most studied classes of almost contact metric structures as direct sum of spaces $\mathcal{C}_i$:
$$\mid\mathcal{C}\mid=\text{the class of cosymplectic manifolds.}$$
$$\mathcal{C}_1=\text{the class of nearly-K-cosymplectic manifolds.}$$
$$\mathcal{C}_2\oplus\mathcal{C}_9=\text{the class of almost cosymplectic manifolds.}$$
$$\mathcal{C}_5=\text{the class of $\alpha$-Kenmotsu manifolds.}$$
$$\mathcal{C}_6=\text{the class of $\alpha$-Sasakian manifolds.}$$
$$\mathcal{C}_5\oplus\mathcal{C}_6=\text{the class of trans-Sasakian manifolds.}$$
$$\mathcal{C}_6\oplus\mathcal{C}_7=\text{the class of quasi-Sasakian manifolds.}$$
$$\mathcal{C}_3\oplus\mathcal{C}_7\oplus\mathcal{C}_8=\text{the class of semi-cosymplectic and normal manifolds.}$$
$$\mathcal{C}_1\oplus\mathcal{C}_5\oplus\mathcal{C}_6=\text{the class of nearly trans-Sasakian manifolds.}$$
$$\mathcal{C}_1\oplus\mathcal{C}_2\oplus\mathcal{C}_9\oplus\mathcal{C}_{10}=\text{the class of quasi-K-cosymplectic manifolds.}$$
$$\mathcal{C}_3\oplus\mathcal{C}_4\oplus\mathcal{C}_5\oplus\mathcal{C}_6\oplus\mathcal{C}_7\oplus\mathcal{C}_8=\text{the class of normal manifolds.}$$
$$\mathcal{D}_1\oplus\mathcal{C}_5\oplus\mathcal{C}_6\oplus\mathcal{C}_7\oplus\mathcal{C}_8\oplus\mathcal{C}_9\oplus\mathcal{C}_{10}=\text{the class of almost-K-contact manifolds.}$$
$$\mathcal{C}_1\oplus\mathcal{C}_2\oplus\mathcal{C}_3\oplus\mathcal{C}_7\oplus\mathcal{C}_8\oplus\mathcal{C}_9\oplus\mathcal{C}_{10}\oplus\mathcal{C}_{11}=\text{the class of semi-cosymplectic manifolds.}$$

Let $(M, g)$ be a 7-dimensional Riemannian manifold with $G_2$ structure $\varphi$ and the associated 2-fold vector cross product $\times$ and let $\xi$ be a nowhere zero vector field of unit length on $M$. Then for
$$\phi(x):=\xi\times x \qquad \eta(x):=g(\xi, x),$$
$(\phi, \xi, \eta, g)$ is an almost contact metric structure on $M$ \cite{MATZEU, ARIKAN}. Throughout this study, $(\phi, \xi, \eta, g)$ will denote the almost contact metric structure (a.c.m.s) induced by the $G_2$ structure $\varphi$ on $M$ and $\Phi$ the fundamental 2-form of the a.c.m.s.

\section{Almost contact metric structures obtained from $G_2$ Structures}

Let $M$ be a manifold with $G_2$ structure $ \varphi$ and $\xi$ a nowhere zero unit vector field on $M$ and $(\phi, \xi, \eta, g)$ the a.c.m.s. with the fundamental form $\Phi$ induced by the $G_2$ structure $\varphi$.

If $\nabla \varphi=0$, then
it can be seen that $\nabla \Phi =0$ if and only if $\nabla \xi=0$ \cite{COSYM, TODD}.

If $\xi$ is a Killing vector field on a manifold with any $G_2$ structure, then
$$
\begin{array}{rcl}
d\eta(x,y)&=&\frac{1}{2}\{(\nabla_x\eta)(y)-(\nabla_y\eta)(x)\}\\
&=&\frac{1}{2}\{g(\nabla_x\xi,y)-g(\nabla_y\xi,x)\}\\
&=&g(\nabla_x\xi,y),
\end{array}
$$
which implies $$d\eta=0 \Leftrightarrow \nabla\xi=0.$$

Therefore if the Killing vector field $\xi$ is not parallel, then the a.c.m.s. can not be nearly-K-cosymplectic ($\mathcal{C}_1$).

To deduce further results, we focus on the covariant derivative of the fundamental 2-form $\Phi$, where the a.c.m.s. $(\phi, \xi, \eta, g)$ is obtained from a $G_2$ structure of any class and $\xi$ is any nonzero vector field. Direct calculation gives
\begin{equation}
(\nabla_x\Phi)(y,z)=g(y,\nabla_x(\xi\times z))+g(\nabla_xz,\xi\times y).
\end{equation}
We also compute some of $i_k(\nabla\Phi),(k=1,...,18)$ to understand which class $\nabla \Phi$ may belong to.

\begin{proposition} \label{proposition1}
Let $\varphi$ be  a $G_2$ structure on $M$ of an arbitrary class and $(\phi, \xi, \eta, g)$ an a.c.m.s. obtained from $\varphi$.  Then
\begin{itemize}
\item[a.]
$i_6(\nabla\Phi)=0$ if and only if $\nabla_{e_i} \xi =0$ for $i=1,\cdots, 6$ (Note that $\nabla_{\xi} \xi $ need not be zero),
\item[b.]
$i_{16}(\nabla\Phi)=0$ if and only if $\nabla_\xi\xi=0 $.
\end{itemize}
\end{proposition}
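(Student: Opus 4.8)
The plan is to reduce both statements to a single principle: each of these quadratic invariants is a sum of squares, and after simplification it equals the squared norm of a covariant derivative of $\xi$ in a prescribed direction; it therefore vanishes exactly when that derivative does. Everything rests on evaluating $\nabla\Phi$ on slots containing $\xi$ and on the fact that $\xi$ has unit length.

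First I would specialize the formula $(\nabla_x\Phi)(y,z)=g(y,\nabla_x(\xi\times z))+g(\nabla_x z,\xi\times y)$ to the components that enter $i_6$ and $i_{16}$. For $i_6$ the relevant quantity is $(\nabla_{e_i}\Phi)(\xi,e_k)$ and for $i_{16}$ it is $(\nabla_\xi\Phi)(\xi,e_k)$; in both the second summand contains the factor $\xi\times\xi=0$ and drops, leaving $g(\xi,\nabla_x(\xi\times e_k))$ with $x=e_i$ or $x=\xi$. Since $g(\xi,\xi\times e_k)=\varphi(\xi,\xi,e_k)=0$ holds identically, differentiating this along $x$ and using metric compatibility gives $g(\xi,\nabla_x(\xi\times e_k))=-g(\nabla_x\xi,\xi\times e_k)$. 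Thus $(\nabla_{e_i}\Phi)(\xi,e_k)=-g(\nabla_{e_i}\xi,\xi\times e_k)$ and $(\nabla_\xi\Phi)(\xi,e_k)=-g(\nabla_\xi\xi,\xi\times e_k)$.

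Next I would assemble the sums. The vectors $\xi\times e_k=\phi(e_k)$, $k=1,\dots,6$, form an orthonormal basis of the hyperplane $\xi^\perp$, since $g(\phi e_j,\phi e_k)=g(e_j,e_k)-\eta(e_j)\eta(e_k)=\delta_{jk}$ and $g(\xi,\xi\times e_k)=0$. Moreover $|\xi|=1$ forces $g(\nabla_x\xi,\xi)=0$, so $\nabla_x\xi\in\xi^\perp$. Hence for fixed $x$ the expansion of $\nabla_x\xi$ in this orthonormal basis yields $\sum_{k=1}^{6}g(\nabla_x\xi,\xi\times e_k)^2=|\nabla_x\xi|^2$. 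Taking $x=e_i$ and summing over $i$ gives $i_6(\nabla\Phi)=\sum_{i=1}^{6}|\nabla_{e_i}\xi|^2$, while $x=\xi$ gives $i_{16}(\nabla\Phi)=|\nabla_\xi\xi|^2$. Each is a sum of squares of norms, so $i_6(\nabla\Phi)=0$ iff $\nabla_{e_i}\xi=0$ for every $i=1,\dots,6$, with no constraint on $\nabla_\xi\xi$, and $i_{16}(\nabla\Phi)=0$ iff $\nabla_\xi\xi=0$.

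I expect the only delicate point to be fixing the summation conventions in $i_6$ and $i_{16}$: the indices must run over the horizontal frame $e_1,\dots,e_6$ and exclude the $\xi$-direction, for otherwise $i_6$ would also absorb the $|\nabla_\xi\xi|^2$ term and force it to vanish. The parenthetical remark that $\nabla_\xi\xi$ need not be zero is precisely the signal that the $e_i$-sum in $i_6$ stops at $6$; I would make this convention explicit and note as a consistency check that any term carrying $\xi$ in the last slot vanishes by skew-symmetry of $\Phi$, so the conclusion is insensitive to whether $e_7=\xi$ is counted there.
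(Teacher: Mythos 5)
Your argument is correct and follows essentially the same route as the paper: kill the second summand via $\xi\times\xi=0$, move the derivative off $\xi\times e_k$ using $g(\xi,\xi\times e_k)=0$ and metric compatibility, and observe that each invariant is a sum of squares of the components of $\nabla_x\xi$. Your justification that $\{\xi\times e_k\}_{k=1}^{6}$ is an orthonormal basis of $\xi^{\perp}$ and that $\nabla_x\xi\in\xi^{\perp}$ is in fact slightly more explicit than the paper's remark that ``$\xi\times e_k$ is also a frame element,'' but the substance is identical.
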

\begin{proof}
By direct calculation, for any $i,k\in \lbrace1,2,...,6\rbrace$
\begin{align*}
(\nabla_{e_i}\Phi)(\xi,e_k)&=g(\xi,\nabla_{e_i}(\xi\times e_k))+g(\nabla_{e_i}e_k,\xi\times\xi)\\
&=g(\xi,\nabla_{e_i}(\xi \times e_k)) \\
&=-g(\nabla_{e_i}\xi,\xi \times e_k )
\end{align*}
and thus, we obtain
$$i_6(\nabla\Phi) = \sum\limits_{i,k}((\nabla_{e_i}\Phi)(\xi,e_k))^2=\sum\limits_{i,k} g(\nabla_{e_i}\xi,\xi \times e_k )^2. $$
Since $\xi \times e_k$ is also a frame element,
$i_6(\nabla\Phi)=0$ if and only if $\nabla_{e_i} \xi $ is zero.

Similarly,
   \begin{align*}
  (\nabla_{\xi}\Phi)(\xi,e_k)&=g(\xi,\nabla_\xi(\xi\times e_k))+g(\nabla_\xi e_k,\xi\times\xi) \\
  &=-g(\nabla_\xi\xi,\xi\times e_k)
  \end{align*}
for any $k\in \lbrace1,2,...,6\rbrace$, and we get
$$i_{16}(\nabla\Phi) =  \sum\limits_{k}(\nabla_{\xi}\Phi)(\xi,e_k)^2=\sum\limits_{k} g(\nabla_\xi\xi,\xi\times e_k)^2.$$
Note that $g(\nabla_{\xi}\xi,\xi)=0$ since $\xi$ is of unit length.
As a result, $i_{16}(\nabla\Phi)=0$ if and only if $\nabla_\xi\xi=0.$
\end{proof}

\begin{proposition}
 \label{proposition2}
 Let $(\phi,\eta,\xi,g)$ be an almost contact metric structure induced by a $G_2$ structure $\varphi$. Then,
\begin{itemize}
\item
$i_{14}(\nabla\Phi)=0$ if and only if $div (\xi)=0.$
\item
$i_{15}(\nabla\Phi)=-div(\xi)g(\xi,v), $
where $v=\sum\limits_{j=1}^6 e_j\times (\nabla_{e_j}\xi)$.
\end{itemize}
\end{proposition}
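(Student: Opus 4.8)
The plan is to evaluate both quadratic invariants directly from the covariant derivative formula
$$(\nabla_x\Phi)(y,z)=g(y,\nabla_x(\xi\times z))+g(\nabla_xz,\xi\times y),$$
specialised to the case in which the last argument is the characteristic vector field. Setting $z=\xi$ and using $\xi\times\xi=0$ collapses the first term, giving the single identity
$$(\nabla_x\Phi)(y,\xi)=g(\nabla_x\xi,\xi\times y)=g(\nabla_x\xi,\phi y),$$
which supplies every summand appearing in $i_{14}$ and $i_{15}$.

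For the first item I would put $x=y=e_i$ and apply $\phi^2=-I+\eta\otimes\xi$ with $\eta(e_i)=0$ to obtain
$$(\nabla_{e_i}\Phi)(\phi e_i,\xi)=g(\nabla_{e_i}\xi,\phi^2 e_i)=-g(\nabla_{e_i}\xi,e_i).$$
Summing over $i=1,\dots,6$ and recalling from Proposition \ref{proposition1} that $g(\nabla_\xi\xi,\xi)=0$, the sum $\sum_{i=1}^6 g(\nabla_{e_i}\xi,e_i)$ equals $\mathrm{div}(\xi)$ computed in the frame $\{e_1,\dots,e_6,\xi\}$. Since $i_{14}(\nabla\Phi)$ is the square of $\sum_i(\nabla_{e_i}\Phi)(\phi e_i,\xi)$, this yields $i_{14}(\nabla\Phi)=(\mathrm{div}\,\xi)^2$, which vanishes precisely when $\mathrm{div}(\xi)=0$.

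For the second item the first factor of $i_{15}$ is again $-\mathrm{div}(\xi)$, so the task reduces to the second factor $\sum_j(\nabla_{e_j}\Phi)(e_j,\xi)=\sum_j g(\nabla_{e_j}\xi,\xi\times e_j)$. Here I would pass to the fundamental $3$-form via $\varphi(u,w,t)=g(u\times w,t)$ and use its total antisymmetry: a single cyclic permutation gives
$$g(\xi\times e_j,\nabla_{e_j}\xi)=\varphi(\xi,e_j,\nabla_{e_j}\xi)=\varphi(e_j,\nabla_{e_j}\xi,\xi)=g(e_j\times\nabla_{e_j}\xi,\xi),$$
so that summing over $j$ identifies the second factor with $g(\xi,v)$, where $v=\sum_{j=1}^6 e_j\times(\nabla_{e_j}\xi)$. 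Multiplying the two factors gives $i_{15}(\nabla\Phi)=-\mathrm{div}(\xi)\,g(\xi,v)$. I expect the only delicate point to be keeping track of the permutation parity and the symmetry of the inner product in this cyclic rewriting; the remaining manipulations are straightforward substitutions into the derivative formula above.
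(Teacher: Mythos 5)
Your proof is correct and follows essentially the same route as the paper: both evaluate the relevant summands from the covariant-derivative formula $(\nabla_x\Phi)(y,z)=g(y,\nabla_x(\xi\times z))+g(\nabla_xz,\xi\times y)$, observe that the double sums in $i_{14}$ and $i_{15}$ factor, and identify the two factors with $-\mathrm{div}(\xi)$ and $g(\xi,v)$. Your cyclic-permutation step $g(\nabla_{e_j}\xi,\xi\times e_j)=\varphi(e_j,\nabla_{e_j}\xi,\xi)=g(e_j\times\nabla_{e_j}\xi,\xi)$ is just a slightly more direct version of the paper's metric-compatibility manipulation, so the substance is identical.
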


\begin{proof}
For any $i,j\in \lbrace1,2,...,6\rbrace$ we have

\begin{align*}
(\nabla_{e_i}\Phi)(\phi e_i,\xi)&=g(\xi\times e_i,\nabla_{e_i}(\xi\times\xi))+g(\nabla_{e_i}\xi,\xi\times(\xi\times e_i)) \\
&=-g(\nabla_{e_i}\xi,e_i) \\
&=g(\xi,\nabla_{e_i}e_i).
\end{align*}

On the other hand,
\begin{align*}
\sum_{i=1}^6\nabla_{e_i}e_i&=-\sum_{i=1}^6 div(e_i)e_i-div(\xi)\xi-\nabla_\xi\xi
\end{align*}
and thus
\begin{align*}
g(\xi,\sum\limits_i\nabla_{e_i}e_i)&=-g(\xi,\sum\limits_i div(e_i)e_i)-g(\xi,div(\xi)\xi)
-g(\xi,\nabla_\xi\xi) \\
&=-div(\xi).
\end{align*}

Then
$$
\begin{array}{rcl}
i_{14}(\nabla\Phi)&=&
\sum\limits_{i,j}(\nabla_{e_i}\Phi)(\phi e_i,\xi)(\nabla_{e_j}\Phi)( \phi e_j,\xi)\\
&=& \Big(g(\xi,\sum\limits_i\nabla_{e_i}e_i)\Big) \Big( g(\xi,\sum\limits_j \nabla_{e_j}e_j)\Big)=(div(\xi))^2.
\end{array}$$

Therefore, $i_{14}(\nabla\Phi)$ is zero if and only if $div(\xi)$ is zero.

Similarly, from equations
 $$(\nabla_{e_i}\Phi)(\phi e_i,\xi)=-g(\nabla_{e_i}\xi,e_i) \ \mbox{and}  \ (\nabla_{e_j}\Phi)(e_j,\xi)=g(\nabla_{e_j}\xi,\xi\times e_j)$$
 we have,
\begin{align*}
 i_{15}(\nabla\Phi)&=\sum\limits_{i,j}(\nabla_{e_i}\Phi) (\phi e_i,\xi) \ (\nabla_{e_j}\Phi)( e_j,\xi)\\
 &=\sum\limits_{i,j}g(\xi,\nabla_{e_i}e_i)g(\nabla_{e_j}\xi,\xi\times e_j) \\
&=\Big(g(\xi,\sum\limits_i \nabla_{e_i}e_i)\Big)\Big(\sum\limits_jg(\xi,\nabla_{e_j}(e_j\times\xi))\Big)\\
&=\Big(g(\xi,-div(\xi)\xi)-g(\xi,\sum\limits_i div(e_i)e_i)\Big)\Big( \sum\limits_j g(\xi,e_j \times\nabla_{e_j} \xi)\Big) \\
&=-div(\xi).g(\xi,v).
\end{align*}

\end{proof}

Now consider in particular an a.c.m.s. induced by a nearly parallel $G_2$ structure.
\begin{proposition}  \label{proposition3}
Let $(\phi,\eta,\xi,g)$ be an almost contact metric structure induced by a nearly parallel $G_2$ structure. Then,
\begin{itemize}
\item
$i_{5}(\nabla\Phi)=0 $ if and only if $\nabla_{\xi} \xi =0$.
\item
If $\nabla_{\xi} \xi =0$, then  $i_{17}(\nabla\Phi)=i_{18}(\nabla\Phi)=0$.
\end{itemize}
\end{proposition}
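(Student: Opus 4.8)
The plan is to reduce each item to an explicit evaluation of components of $\nabla\Phi$ through formula (3.1), using the special shape of $\nabla\varphi$ for the nearly parallel class. Recall that for a nearly parallel $G_2$ structure the covariant derivative takes the form $(\nabla_X\varphi)(Y,Z,W)=c\,\psi(X,Y,Z,W)$, where $\psi=\ast\varphi$ is the associated $4$-form and $c$ is a constant (proportional to $k$). Writing $(\nabla_X\times)(Y,Z):=\nabla_X(Y\times Z)-(\nabla_XY)\times Z-Y\times(\nabla_XZ)$, this reads $g\big((\nabla_X\times)(Y,Z),W\big)=c\,\psi(X,Y,Z,W)$. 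The single fact I will extract from it is that $(\nabla_\xi\times)(\xi,e_k)=0$ for every $k$, since $\psi$ is alternating and carries $\xi$ in two of its four slots.

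For the first item I would compute $(\nabla_\xi\Phi)(e_j,e_k)$ for $j,k\in\{1,\dots,6\}$. Starting from (3.1) and expanding $\nabla_\xi(\xi\times e_k)=(\nabla_\xi\times)(\xi,e_k)+(\nabla_\xi\xi)\times e_k+\xi\times(\nabla_\xi e_k)$, the first summand drops out by the identity above, while the $\xi\times(\nabla_\xi e_k)$ contribution cancels the remaining term of (3.1) after using the total antisymmetry $\varphi(\xi,A,e_j)=-\varphi(\xi,e_j,A)$. What survives is the clean expression $(\nabla_\xi\Phi)(e_j,e_k)=g\big(e_j,(\nabla_\xi\xi)\times e_k\big)$. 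Setting $v:=\nabla_\xi\xi$, which lies in $\mathcal H:=\xi^{\perp}$ because $|\xi|=1$, I then evaluate $i_5(\nabla\Phi)=\sum_{j,k}g(e_j,v\times e_k)^2=\sum_k\big|\mathrm{proj}_{\mathcal H}(v\times e_k)\big|^2$. Feeding in the cross-product norm identity $|X\times Y|^2=|X|^2|Y|^2-g(X,Y)^2$, the relation $g(\xi,v\times e_k)=g(\xi\times v,e_k)$, and the facts $\xi\times v\in\mathcal H$ with $|\xi\times v|=|v|$, the sum collapses to $i_5(\nabla\Phi)=5|v|^2-|v|^2=4\,|\nabla_\xi\xi|^2$, which vanishes precisely when $\nabla_\xi\xi=0$.

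The second item I expect to follow immediately from Proposition \ref{proposition1}. Both $i_{17}$ and $i_{18}$ carry the common factor $\alpha(\xi,\xi,e_k)=(\nabla_\xi\Phi)(\xi,e_k)$, and the computation in Proposition \ref{proposition1} already gives $(\nabla_\xi\Phi)(\xi,e_k)=-g(\nabla_\xi\xi,\xi\times e_k)$. Hence, assuming $\nabla_\xi\xi=0$, this factor is identically zero, so $i_{17}(\nabla\Phi)=\sum_{i,k}(\nabla_{e_i}\Phi)(e_i,e_k)\,(\nabla_\xi\Phi)(\xi,e_k)=0$ and likewise $i_{18}(\nabla\Phi)=0$; in fact this half needs no assumption on the $G_2$ class.

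The only genuinely delicate step is the first one: I must correctly invoke the $\mathcal W_1$ structure identity to annihilate $(\nabla_\xi\times)(\xi,e_k)$, and then track the antisymmetries of $\varphi$ so that the $\xi\times(\nabla_\xi e_k)$ terms cancel, isolating the $\nabla_\xi\xi$ contribution. Once that reduction is in hand, the passage to $4|\nabla_\xi\xi|^2$ is a routine use of the norm identity and orthogonality, and I foresee no further difficulty.
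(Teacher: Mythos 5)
Your proof is correct, and it diverges from the paper's in an instructive way on the second item. For the first item you follow essentially the paper's route: both arguments rest on the fact that for a nearly parallel structure $(\nabla_\xi P)(\xi,e_k)=0$ (the paper uses this implicitly when it expands $\nabla_\xi(\xi\times e_k)=(\nabla_\xi\xi)\times e_k+\xi\times(\nabla_\xi e_k)$), and both cancel the $\xi\times(\nabla_\xi e_k)$ terms by the skew-symmetry of $\varphi$ to reach $(\nabla_\xi\Phi)(e_j,e_k)=g(e_j,(\nabla_\xi\xi)\times e_k)=-g(\nabla_\xi\xi,e_j\times e_k)$. Where you differ is the last step: the paper concludes by asserting that $e_j\times e_k$ is ``also a frame element'' so that the sum of squares vanishes only when $\nabla_\xi\xi=0$, whereas you evaluate the sum in closed form via the norm identity $|x\times y|^2=|x|^2|y|^2-g(x,y)^2$ and obtain $i_5(\nabla\Phi)=4|\nabla_\xi\xi|^2$; this is sharper and removes the need to check that the products $e_j\times e_k$ span the horizontal distribution. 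For the second item your argument is genuinely shorter than the paper's: since every summand of $i_{17}$ and $i_{18}$ carries the factor $\alpha(\xi,\xi,e_k)=(\nabla_\xi\Phi)(\xi,e_k)=-g(\nabla_\xi\xi,\xi\times e_k)$ (already computed in Proposition \ref{proposition1} for an arbitrary $G_2$ structure), the hypothesis $\nabla_\xi\xi=0$ kills both invariants at once, with no use of the nearly parallel condition. The paper instead carries out the full computation and arrives at $i_{18}(\nabla\Phi)=-g(\nabla_\xi\xi,v)$ and $i_{17}(\nabla\Phi)=g(\xi\times(\nabla_\xi\xi),v)$ with $v=\sum_i e_i\times\nabla_{e_i}\xi$; that buys explicit formulas valid when $\nabla_\xi\xi\neq 0$, while your observation buys brevity and the stronger statement that the second item holds for a $G_2$ structure of any class.
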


\begin{proof}
Since $\varphi$ is nearly parallel, for any $j,k\in \lbrace1,2,...,6\rbrace$ we have

\begin{align*}
(\nabla_\xi\Phi)(e_j,e_k)&=g(e_j,\nabla_\xi(\xi\times e_k))+g(\nabla_\xi e_k,\xi\times e_j)\\
&=g(e_j,\nabla_\xi\xi\times e_k)+g(e_j,\xi\times\nabla_\xi e_k)+g(\nabla_\xi e_k,\xi\times e_j) \\
&=-g(\nabla_\xi\xi,e_j\times e_k).
\end{align*}
So,\\
 $$i_5(\nabla\Phi)=\sum\limits_{j,k}((\nabla_\xi\Phi)(e_j,e_k))^2=\sum\limits_{j,k}(g(\nabla_\xi\xi,e_j\times e_k))^2$$
 which is zero if and only if $\nabla_\xi \xi $ is zero. Here, $e_j\times e_k$ is also a frame element.

Similarly,
 For any $i,k\in \lbrace1,2,...,6\rbrace$,
 \begin{align*}
 (\nabla_{e_i}\Phi)(e_i,\phi e_k)&=g(e_i,\nabla_{e_i}(\xi\times(\xi\times e_k))+g(\nabla_{e_i}(\xi\times e_k),\xi\times e_i) \\
 &=g(e_i,\nabla_{e_i}(-e_k))+g(\nabla_{e_i}(\xi\times e_k),\xi\times e_i) \\
 &=g(\nabla_{e_i}e_i,e_k)+g(\nabla_{e_i}(\xi\times e_k),\xi\times e_i)
 \end{align*}
\begin{align*}
(\nabla_{\xi}\Phi)(\xi,e_k)&=g(\xi,\nabla_\xi(\xi\times e_k))+g(\nabla_\xi e_k,\xi\times\xi)\\
&=g(\xi,\nabla_\xi \xi\times e_k)+g(\xi,\xi\times\nabla_\xi e_k) \\
&=-g(e_k,(\nabla_\xi \xi)\times\xi).
\end{align*}
Then
\begin{align*}
i_{18}(\nabla\Phi) &=\sum\limits_{i,k}((\nabla_{e_i}\Phi)(e_i,\phi e_k))((\nabla_{\xi}\Phi)(\xi,e_k))\\
&=-\sum\limits_{i,k}\Big(g(\nabla_{e_i}e_i,e_k)+g(\nabla_{e_i}(\xi\times e_k),\xi\times e_i) \Big) \Big(g(e_k,(\nabla_\xi \xi)\times\xi) \Big) \\
&=-\sum\limits_{i,k}\Big( g(\nabla_{e_i}e_i,e_k)g(e_k,(\nabla_\xi \xi)\times\xi) \Big) \\
&\quad -\sum\limits_{i,k} \Big( g(\nabla_{e_i}(\xi\times e_k),\xi\times e_i)g(e_k,(\nabla_\xi \xi)\times\xi) \Big) \\
&=-\sum\limits_{i,k}\Big( g(\nabla_{e_i}e_i,e_k)g(e_k,(\nabla_\xi \xi)\times\xi) \Big) \\
& \quad +\sum\limits_{i,k}\Big( g(\nabla_{e_i}e_i,e_k)g(e_k,(\nabla_\xi \xi)\times\xi) \Big) \\
& \quad -\sum\limits_{i,k}\Big( g(\xi\times e_k,e_i\times \nabla_{e_i}\xi)g(e_k,(\nabla_\xi \xi)\times\xi \Big) \\
&=-\sum\limits_i g(\xi\times (\sum\limits_k g((\nabla_\xi\xi)\times \xi,e_k)e_k+g((\nabla_\xi\xi)\times\xi,\xi)\xi),e_i \times \nabla_{e_i}\xi) \\
&=-\sum\limits _i g(\xi\times((\nabla_\xi\xi)\times\xi),e_i\times\nabla_{e_i}\xi) \\
&=-g(\nabla_\xi\xi,\sum\limits_i (e_i\times\nabla_{e_i}\xi)).
\end{align*}
Thus, if $\nabla_\xi\xi$ is zero, so is $i_{18}(\nabla\Phi)$.

For $i_{17}$ we compute
\begin{equation*}
(\nabla_{e_i}\Phi)(e_i,e_k)=g(e_i,\nabla_{e_i}(\xi\times e_k))+g(\nabla_{e_i}e_k,\xi\times e_i)
\end{equation*}
and
\begin{align*}
 (\nabla_\xi\Phi)(\xi,e_k)&=g(\xi,\nabla_\xi(\xi\times e_k))+g(\nabla_\xi e_k,\xi\times\xi) \\
 &=-g(\nabla_\xi\xi,\xi\times e_k) \\
 &=g(e_k,\xi\times(\nabla_\xi\xi))
 \end{align*}
for any $i,k\in \lbrace1,2,...,6\rbrace$ and obtain
 \begin{align*}
 i_{17}(\nabla\Phi)&=\sum\limits_{i,k}((\nabla_{e_i}\Phi)(e_i,e_k))((\nabla_{\xi}\Phi)(\xi,e_k))\\
 &=  \sum\limits_{i,k}\Big( -g(\nabla_{e_i}e_i,\xi\times e_k)-g(e_k,\nabla_{e_i}(\xi\times e_i)\Big)\Big(g(e_k,\xi\times(\nabla_\xi\xi) \Big) \\
 &= \sum\limits_{i,k}g(e_k,\xi\times(\nabla_{e_i}e_i)g(e_k,\xi\times\nabla_\xi\xi)- \sum\limits_{i,k}g(e_k,\xi\times(\nabla_{e_i}e_i)g(e_k,\xi\times\nabla_\xi\xi)\\
 & +\sum\limits_{i,k}g(e_k,e_i \times\nabla_{e_i}\xi)g(e_k,\xi\times\nabla_\xi\xi) \\
 &=\sum\limits_{i,k}g(e_k,e_i \times\nabla_{e_i}\xi)g(e_k,\xi\times\nabla_\xi\xi) \\
 &=g(\xi\times(\nabla_\xi\xi),\sum\limits_i e_i\times(\nabla_{e_i}\xi))
 \end{align*}
 Thus, if $ \nabla_\xi\xi=0$, then $i_{17}(\nabla\Phi)=0$.
 \end{proof}

Similarly, if $\nabla\xi$ is zero, then  so is $i_{15}(\nabla\Phi)$.

Before giving results on possible classes of a.c.m.s. induced by $G_2$ structures, note that $\delta\eta=-div(\xi)$.
To see this, consider the orthonormal basis $ \{ e_1, \cdots, e_6, \xi\}$. Then

 \begin{align*}
 div(\xi)&=\sum\limits_{i=1}^6g(\nabla_{e_i}\xi,e_i)+g(\nabla_\xi\xi,\xi) \\
 &=\sum\limits_{i=1}^6g(\nabla_{e_i}\xi,e_i).
\end{align*}

On the other hand, since

 \begin{align*}
 (\nabla_{e_i}\eta)(e_i)&=e_i[\eta(e_i)]-\eta(\nabla_{e_i}e_i) \\
 &=g(\nabla_{e_i}\xi,e_i)+g(\xi,\nabla_{e_i}e_i)-g(\xi,\nabla_{e_i}e_i) \\
 &=g(\nabla_{e_i}\xi,e_i),
 \end{align*}

we have
$$\delta\eta=-\sum\limits_{i=1}^6(\nabla_{e_i}\eta)(e_i)=-\sum\limits_{i=1}^6g(\nabla_{e_i}\xi,e_i)=-div(\xi).$$

\begin{theorem} \label{theorem1}
Let $M$ be a manifold with a $G_2$ structure $\varphi$ and $(\phi,\xi,\eta,g)$ be an almost contact metric structure (a.c.m.s.) obtained from $\varphi$.\\
(a) If $\nabla_{\xi}\xi\neq 0$, then $\nabla\Phi$ can not be in classes $\mathcal{D}_2, \mathcal{C}_1,\mathcal{C}_2,\cdots,\mathcal{C}_{11}$.\\
(b) If $div(\xi)\neq 0$, then the almost contact metric structure can not belong to classes $\mathcal{D}_1 $, $\mathcal{C}_i$ for $i=1,2,3,4,6,7,\cdots,12$ and can not be semi-cosymplectic ($\mathcal{C}_1 \oplus \mathcal{C}_2\oplus \mathcal{C}_3\oplus \mathcal{C}_7\oplus \mathcal{C}_8\oplus \mathcal{C}_9\oplus \mathcal{C}_{10}\oplus \mathcal{C}_{11}$).
\end{theorem}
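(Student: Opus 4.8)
The plan is to read off the two invariants already computed, namely $i_{16}(\nabla\Phi)$ from Proposition \ref{proposition1} and $i_{14}(\nabla\Phi)$ from Proposition \ref{proposition2}, and to confront them with the tables of quadratic invariants characterizing the classes $\mathcal{C}_1,\dots,\mathcal{C}_{12}$. The guiding principle is elementary: if $\nabla\Phi$ lies in a class $\mathcal{K}$, then $\nabla\Phi$ satisfies every relation listed for $\mathcal{K}$, so producing a single invariant that the hypothesis forces to be nonzero while $\mathcal{K}$ requires it to vanish rules out membership in $\mathcal{K}$.

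For part (a), Proposition \ref{proposition1}(b) yields $i_{16}(\nabla\Phi)=\sum_k g(\nabla_\xi\xi,\xi\times e_k)^2$, which is nonzero precisely when $\nabla_\xi\xi\neq0$. I would then scan the characterizations and note that each of $\mathcal{C}_1,\dots,\mathcal{C}_{11}$ imposes $i_{16}(\alpha)=0$: for $\mathcal{C}_1,\dots,\mathcal{C}_4$ because $i_m=0$ for every $m\geq5$; for $\mathcal{C}_5,\dots,\mathcal{C}_{10}$ because $16\in A$; and for $\mathcal{C}_{11}$ because $i_m=0$ for $m\neq5$. Directly from the definition of $\mathcal{D}_2$ one also has $\alpha(\xi,\xi,z)=0$, so $i_{16}$ vanishes on $\mathcal{D}_2=\mathcal{C}_5\oplus\cdots\oplus\mathcal{C}_{11}$ as well. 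Hence $i_{16}(\nabla\Phi)\neq0$ ejects $\nabla\Phi$ from all the listed classes. Note that $\mathcal{C}_{12}$ is deliberately absent from the list, since there $i_{16}(\alpha)=||\alpha||^2$ and a nonzero $i_{16}$ is perfectly compatible with it.

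For part (b), Proposition \ref{proposition2} gives $i_{14}(\nabla\Phi)=(\mathrm{div}(\xi))^2$, nonzero exactly when $\mathrm{div}(\xi)\neq0$. I would verify that $i_{14}(\alpha)=0$ belongs to the defining relations of every class in the stated list: $\mathcal{C}_1,\mathcal{C}_2,\mathcal{C}_3$ ($m\geq4$), $\mathcal{C}_4$ ($m>4$), $\mathcal{C}_6,\mathcal{C}_7,\mathcal{C}_8,\mathcal{C}_9,\mathcal{C}_{10}$ (where $i_{14}=0$ is written explicitly), $\mathcal{C}_{11}$ ($m\neq5$), and $\mathcal{C}_{12}$ ($m\neq16$). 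The decisive exclusion is $\mathcal{C}_5$: there the relation reads $i_{14}(\alpha)=2n\,i_6(\alpha)$, which need not vanish, and this is exactly why $i=5$ is omitted from the list and why a structure with $\mathrm{div}(\xi)\neq0$ can still be $\alpha$-Kenmotsu.

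The one genuine subtlety, and the step I would handle most carefully, is that $\mathcal{D}_1$ and the semi-cosymplectic class are direct sums $\mathcal{C}_{i_1}\oplus\cdots\oplus\mathcal{C}_{i_r}$, so vanishing of $i_{14}$ on each summand does not automatically give vanishing on the sum, since $i_{14}$ is quadratic and could in principle carry cross terms. I would dispose of this by observing that $i_{14}(\alpha)=\big(\sum_i\alpha(e_i,\phi e_i,\xi)\big)^2=L(\alpha)^2$ with $L(\alpha)=\sum_i\alpha(e_i,\phi e_i,\xi)$ a \emph{linear} functional; thus $i_{14}\equiv0$ on a subspace is equivalent to $L\equiv0$ on it, and linearity propagates the vanishing from the summands to $\mathcal{D}_1=\mathcal{C}_1\oplus\cdots\oplus\mathcal{C}_4$ and to $\mathcal{C}_1\oplus\mathcal{C}_2\oplus\mathcal{C}_3\oplus\mathcal{C}_7\oplus\cdots\oplus\mathcal{C}_{11}$. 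The same remark applies to $i_{16}=\sum_k\alpha(\xi,\xi,e_k)^2$ in part (a), a sum of squares of linear functionals, which settles $\mathcal{D}_2$ cleanly. With the direct-sum cases secured, both parts follow from the single-nonzero-invariant argument.
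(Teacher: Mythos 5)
Your proposal is correct, and for the individual classes $\mathcal{C}_i$ and for $\mathcal{D}_1$, $\mathcal{D}_2$ it is essentially the paper's argument: $i_{16}(\nabla\Phi)\neq 0$ kills $\mathcal{D}_2$ and $\mathcal{C}_1,\dots,\mathcal{C}_{11}$ in part (a), and $i_{14}(\nabla\Phi)=(\mathrm{div}\,\xi)^2\neq 0$ kills $\mathcal{D}_1$ and the listed $\mathcal{C}_i$ in part (b), by comparison with the tabulated relations. The one place you genuinely diverge is the semi-cosymplectic exclusion. The paper does not argue via invariants there at all: it invokes the defining condition $\delta\Phi=0$, $\delta\eta=0$ of semi-cosymplectic structures and the identity $\delta\eta=-\mathrm{div}(\xi)$ established just before the theorem, so $\mathrm{div}(\xi)\neq 0$ contradicts $\delta\eta=0$ directly. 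You instead observe that $i_{14}(\alpha)=L(\alpha)^2$ with $L(\alpha)=\sum_i\alpha(e_i,\phi e_i,\xi)$ linear, so that the vanishing of $i_{14}$ on each summand forces $L$, and hence $i_{14}$, to vanish on the whole direct sum $\mathcal{C}_1\oplus\mathcal{C}_2\oplus\mathcal{C}_3\oplus\mathcal{C}_7\oplus\cdots\oplus\mathcal{C}_{11}$. This is a valid and in fact slightly more careful route: it addresses the cross-term issue for quadratic invariants on direct sums, which the paper sidesteps by quoting the defining condition. The two arguments are ultimately the same fact in disguise, since $L(\nabla\Phi)=-\sum_i g(\nabla_{e_i}\xi,e_i)=\delta\eta$; your $L$ is exactly the codifferential of $\eta$, and ``$L$ vanishes on the semi-cosymplectic class'' is precisely the condition $\delta\eta=0$. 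Either presentation closes the proof.
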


In following proofs we use the relations below given in \cite{CHINEA} together with properties of $i_m$ for each $\mathcal{C}_i$:\\
If $\alpha\in\mathcal{D}_1$, then $i_m(\alpha)=0$ for $m\geq 5$.\\
If $\alpha\in\mathcal{D}_2$, then $i_m(\alpha)=0$ for $m=1,2,3,4,16,17,18$.\\

\begin{proof}
(a) Let $\nabla_{\xi}\xi\neq0$. Then by the proposition [\ref{proposition1}], we have $i_{16}(\nabla\Phi)\neq0$. This implies $\nabla\Phi\notin \mathcal{D}_2$. In addition, $\nabla\Phi$ can not belong to any of the classes $C_i$, $i=1,\ldots,11$.

(b)If $div(\xi)\neq 0$, then the proposition [\ref{proposition2}] yields that $i_{14}(\nabla\Phi)=(div(\xi))^2\neq 0$. Hence $\nabla\Phi$ can not satisfy the defining relations of the classes
$$\mathcal{D}_1= \mathcal{C}_1\oplus \mathcal{C}_2\oplus \mathcal{C}_3 \oplus \mathcal{C}_4,$$
$$\mathcal{C}_1,\mathcal{C}_2,\mathcal{C}_3,\mathcal{C}_4,\mathcal{C}_6,\cdots ,\mathcal{C}_{12}. $$
Besides, the defining relation of semi cosymplectic manifolds is
$$ \delta \Phi=0 \ \mbox{and} \ \delta\eta=0.$$
Since $div(\xi)\neq 0$, $\delta\eta \neq 0$, and thus the a.c.m.s. is not semi cosymplectic.
\end{proof}

Note that if $\nabla_{\xi}\xi\neq 0$, then since $\nabla\Phi\notin\mathcal{D}_2=\mathcal{C}_5\oplus\ldots\oplus\mathcal{C}_{11}$, the a.c.m.s. can not be contained in any subclass of $\mathcal{D}_2$. In particular, the a.c.m.s. can not be $\alpha$-Kenmotsu, $\alpha$-Sasakian, trans-Sasakian or quasi-Sasakian.

If $div(\xi)\neq 0$, then we have $\nabla\Phi\notin\mathcal{D}_1=\mathcal{C}_1\oplus\ldots\oplus\mathcal{C}_4$. In this case, the a.c.m.s. can not be nearly-K-cosymplectic. Also, since the a.c.m.s. can not be semi-cosymplectic, it can not be almost-cosymplectic, $\alpha$-Kenmotsu, $\alpha$-Sasakian, trans-Sasakian, normal semi-cosymplectic or quasi-K-cosymplectic.

Note also that the class $\mathcal{C}_{12}$ is not contained in the class of semi-cosymplectic manifolds. We give a proof together with examples in \cite{Hermitian}. For completeness, we also remind the proof here.

The defining relation of $\mathcal{C}_{12}$ gives $$\nabla_{\xi}\Phi(\xi,x)=-(\nabla_{\xi}\eta)(\phi(x)),$$ which does not have to be zero. Assume that $$(\nabla_{\xi}\eta)(\phi(x))=0$$ for all vector fields $x$. Then replacing $x$ with $\phi(x)$, we get $(\nabla_{\xi}\eta)(x)=0$, but then the defining relation of $\mathcal{C}_{12}$ implies $\nabla_x\Phi=0$ for all $x$, that is there is no element in $\mathcal{C}_{12}$ which is not in the trivial class. Thus there is a vector field $x_0$ on $M$ such that
$$(\delta\Phi)(x_0)=-\nabla_{\xi}\Phi(\xi,x_0)=(\nabla_{\xi}\eta)(\phi(x_0))\neq 0.$$ Therefore the class $\mathcal{C}_{12}$ is not semi-cosymplectic.

Consider an a.c.m.s. induced by a nearly parallel $G_2$ structure. We deduce following results.

\begin{theorem} \label{theorem2}
Let $(\phi,\xi,\eta,g)$ be an a.c.m.s. obtained from a nearly parallel $G_2$ structure $\varphi$. If $\nabla_\xi\xi \neq0$, then $\nabla\Phi$ can not be in classes $\mathcal{D}_1,\mathcal{D}_2,\mathcal{C}_{12}$.($\nabla\Phi$ may be contained by the classes $\mathcal{D}_1 \oplus \mathcal{D}_2, \mathcal{D}_1 \oplus\mathcal{C}_{12},\mathcal{D}_2 \oplus\mathcal{C}_{12},\mathcal{D}_1 \oplus \mathcal{D}_2\oplus\mathcal{C}_{12}$).
\end{theorem}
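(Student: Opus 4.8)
The plan is to exclude each of $\mathcal{D}_1$, $\mathcal{D}_2$ and $\mathcal{C}_{12}$ separately by producing, for each, one quadratic invariant of $\nabla\Phi$ that the hypotheses force to be nonzero while the defining relations of that subspace force it to vanish. Every invariant I need has already been evaluated in Propositions~\ref{proposition1} and~\ref{proposition3}, so beyond citing the defining relations of $\mathcal{D}_1$, $\mathcal{D}_2$ and $\mathcal{C}_{12}$ recalled just before Theorem~\ref{theorem1}, no new computation is required.

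First I would dispose of $\mathcal{D}_2$, which is immediate from the arbitrary-class results. Since $\nabla_\xi\xi\neq 0$, Proposition~\ref{proposition1}(b) gives $i_{16}(\nabla\Phi)\neq 0$, whereas every $\alpha\in\mathcal{D}_2$ satisfies $i_{16}(\alpha)=0$; hence $\nabla\Phi\notin\mathcal{D}_2$. (This is the exclusion already recorded in Theorem~\ref{theorem1}(a).) Next I would handle $\mathcal{D}_1$ and $\mathcal{C}_{12}$ simultaneously, and here the nearly parallel hypothesis is what makes the argument run. By Proposition~\ref{proposition3}, for a nearly parallel $G_2$ structure one has $i_5(\nabla\Phi)=\sum_{j,k}g(\nabla_\xi\xi,e_j\times e_k)^2$, so that $i_5(\nabla\Phi)=0$ exactly when $\nabla_\xi\xi=0$; our assumption $\nabla_\xi\xi\neq 0$ therefore forces $i_5(\nabla\Phi)\neq 0$. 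But $\alpha\in\mathcal{D}_1$ forces $i_m(\alpha)=0$ for all $m\geq 5$, in particular $i_5(\alpha)=0$, and $\alpha\in\mathcal{C}_{12}$ forces $i_m(\alpha)=0$ for all $m\neq 16$, again giving $i_5(\alpha)=0$. Each membership contradicts $i_5(\nabla\Phi)\neq 0$, so $\nabla\Phi\notin\mathcal{D}_1$ and $\nabla\Phi\notin\mathcal{C}_{12}$.

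There is no real computational obstacle, since the two driving identities are already in hand; the only point worth isolating is why the nearly parallel assumption is essential rather than cosmetic. It enters solely through Proposition~\ref{proposition3}, where the Leibniz-type expansion of $\nabla_\xi(\xi\times e_k)$ used to reduce $(\nabla_\xi\Phi)(e_j,e_k)$ to $-g(\nabla_\xi\xi,e_j\times e_k)$ is legitimate only because $\varphi$ is nearly parallel; for a general $G_2$ class the value of $i_5(\nabla\Phi)$ need not be controlled by $\nabla_\xi\xi$ alone, which is precisely why $\mathcal{D}_1$ and $\mathcal{C}_{12}$ could not be excluded in Theorem~\ref{theorem1}. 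I would close by noting that the argument constrains only the individual subspaces: it says nothing about the mixed components $\mathcal{D}_1\oplus\mathcal{D}_2$, $\mathcal{D}_1\oplus\mathcal{C}_{12}$, $\mathcal{D}_2\oplus\mathcal{C}_{12}$ and $\mathcal{D}_1\oplus\mathcal{D}_2\oplus\mathcal{C}_{12}$, in which $i_5$ and $i_{16}$ are no longer bound by the single-class relations, in agreement with the parenthetical remark in the statement.
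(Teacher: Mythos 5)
Your proposal is correct and follows exactly the paper's own argument: $i_{16}(\nabla\Phi)\neq 0$ (Proposition~\ref{proposition1}) rules out $\mathcal{D}_2$, and $i_5(\nabla\Phi)\neq 0$ (Proposition~\ref{proposition3}, where nearly parallelness is used) rules out both $\mathcal{D}_1$ and $\mathcal{C}_{12}$ via their defining invariant relations. Your added remarks on where the nearly parallel hypothesis enters and why the mixed classes are not excluded are accurate but not part of the paper's proof.
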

\begin{proof}
Let $\nabla_\xi\xi \neq0$. By proposition [\ref{proposition3}], $i_5(\nabla\Phi)\neq 0$. So, $\nabla\Phi$ can not be in $\mathcal{D}_1$ and $\mathcal{C}_{12}$. Besides, by the  proposition [\ref{proposition1}], we have $i_{16}(\nabla\Phi)\neq0$, then  $\nabla\Phi$ can not be in $\mathcal{D}_2$.
\end{proof}

In particular, the a.c.m.s. can not belong to any subclasses of $\mathcal{D}_1$ and $\mathcal{D}_2$.

\begin{theorem} \label{theorem3}
Let $(\phi,\xi,\eta,g)$ be an a.c.m.s. obtained from a nearly parallel $G_2$ structure $\varphi$. Then, $\nabla_\xi\xi =0$ if and only if $M$ is almost K-contact.
\end{theorem}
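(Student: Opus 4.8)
The plan is to translate the direct-sum description of the almost K-contact class into the vanishing of just two of the quadratic invariants, and then quote the earlier propositions. Recall from the list in the Preliminaries that the class of almost-K-contact manifolds is $\mathcal{D}_1\oplus\mathcal{C}_5\oplus\mathcal{C}_6\oplus\mathcal{C}_7\oplus\mathcal{C}_8\oplus\mathcal{C}_9\oplus\mathcal{C}_{10}$, whose orthogonal complement in $\mathcal{C}=\mathcal{D}_1\oplus\mathcal{D}_2\oplus\mathcal{C}_{12}$ is exactly $\mathcal{C}_{11}\oplus\mathcal{C}_{12}$ (since $\mathcal{D}_2=\mathcal{C}_5\oplus\cdots\oplus\mathcal{C}_{11}$). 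Hence $M$ is almost K-contact if and only if the $\mathcal{C}_{11}$- and $\mathcal{C}_{12}$-components of $\nabla\Phi$ both vanish, and the whole problem reduces to detecting these two components.

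First I would show that for an arbitrary $\alpha\in\mathcal{C}$ with orthogonal decomposition $\alpha=\sum_{i=1}^{12}\alpha_i$ one has $i_5(\alpha)=\lVert\alpha_{11}\rVert^2$ and $i_{16}(\alpha)=\lVert\alpha_{12}\rVert^2$. The device is to argue at the level of tensor values rather than of quadratic forms. Writing $i_5(\alpha)=\sum_{j,k}\alpha(\xi,e_j,e_k)^2$, I observe that the defining relations force $i_5(\alpha_i)=0$ for every $i\le 10$ and for $i=12$, so each such $\alpha_i(\xi,e_j,e_k)=0$ on horizontal vectors; consequently $\alpha(\xi,e_j,e_k)=\alpha_{11}(\xi,e_j,e_k)$ and $i_5(\alpha)=i_5(\alpha_{11})=\lVert\alpha_{11}\rVert^2$. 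The parallel argument with $i_{16}(\alpha)=\sum_k\alpha(\xi,\xi,e_k)^2$ uses that $\alpha(\xi,\xi,z)=0$ whenever $\alpha\in\mathcal{D}_1\oplus\mathcal{D}_2$: this is immediate for $\mathcal{D}_1$, and for $\mathcal{D}_2$ it follows by setting $x=y=\xi$ in the defining formula together with $\alpha(\xi,\xi,\xi)=0$. This leaves $i_{16}(\alpha)=i_{16}(\alpha_{12})=\lVert\alpha_{12}\rVert^2$. Thus $\alpha_{11}=0\iff i_5(\alpha)=0$ and $\alpha_{12}=0\iff i_{16}(\alpha)=0$, so that $M$ is almost K-contact if and only if $i_5(\nabla\Phi)=i_{16}(\nabla\Phi)=0$.

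To finish I would simply combine this equivalence with the earlier computations. Since $\varphi$ is nearly parallel, Proposition \ref{proposition3} gives $i_5(\nabla\Phi)=0\iff\nabla_\xi\xi=0$, while Proposition \ref{proposition1} gives $i_{16}(\nabla\Phi)=0\iff\nabla_\xi\xi=0$ for a $G_2$ structure of arbitrary class. Hence both invariants vanish precisely when $\nabla_\xi\xi=0$, and by the previous paragraph this is exactly the condition for $M$ to be almost K-contact, which is the assertion.

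I expect the main obstacle to be the bookkeeping in the middle step: one must verify that no component other than $\mathcal{C}_{11}$ (respectively $\mathcal{C}_{12}$) contributes to the entries $\alpha(\xi,e_j,e_k)$ (respectively $\alpha(\xi,\xi,e_k)$), and that the potential cross terms between distinct irreducible pieces drop out. Reducing the quadratic invariants to pointwise vanishing of the relevant tensor entries, and reading the values of $i_5$ and $i_{16}$ on each piece directly from the defining relations listed in the Preliminaries, makes this transparent; the nearly parallel hypothesis is needed only to invoke Proposition \ref{proposition3}, whereas the $i_{16}$ half of the argument is completely general.
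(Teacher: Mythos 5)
Your argument is correct, but it follows a genuinely different route from the paper's. The paper works directly with the tensorial characterization of the almost K-contact class, namely $\nabla_\xi\phi=0$: since $\varphi$ is nearly parallel, $(\nabla_\xi\phi)(x)=\nabla_\xi(\xi\times x)-\xi\times\nabla_\xi x=(\nabla_\xi\xi)\times x$, and this vanishes for every $x$ precisely when $\nabla_\xi\xi=0$ --- a two-line computation in which the nearly parallel hypothesis enters only to let $\nabla_\xi$ pass through the cross product with $\xi$. You instead translate membership in $\mathcal{D}_1\oplus\mathcal{C}_5\oplus\cdots\oplus\mathcal{C}_{10}$ into the vanishing of the $\mathcal{C}_{11}$- and $\mathcal{C}_{12}$-components, detect those by $i_5$ and $i_{16}$, and then quote Propositions \ref{proposition1} and \ref{proposition3}. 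Your bookkeeping step is sound: because $i_5$ and $i_{16}$ are sums of squares of tensor entries, their vanishing on each irreducible summand forces the corresponding entries $\alpha_i(\xi,e_j,e_k)$, resp.\ $\alpha_i(\xi,\xi,e_k)$, to vanish pointwise, so there are no cross terms and $i_5(\alpha)=\lVert\alpha_{11}\rVert^2$, $i_{16}(\alpha)=\lVert\alpha_{12}\rVert^2$; combined with the positive definiteness of the norm this gives the equivalence you need. What your route buys is a clean statement that almost K-contactness is exactly the condition $i_5(\nabla\Phi)=i_{16}(\nabla\Phi)=0$ (equivalently $\nabla_\xi\Phi=0$), and an explicit localization of where nearly parallelness is used (only in the $i_5$ half, via Proposition \ref{proposition3}); what it costs is length, and the fact that the nearly parallel hypothesis is still doing the same work, just hidden inside the proof of Proposition \ref{proposition3} rather than appearing in a one-line computation of $\nabla_\xi\phi$.
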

\begin{proof}
The defining relation of almost K-contact manifolds is $\nabla_\xi \phi=0.$ Since $\varphi$ is nearly parallel, for any vector field $x$,
\begin{align*}
(\nabla_\xi \phi)(x) &= \nabla_\xi(\phi x)-\phi(\nabla_\xi x)=\nabla_\xi(\xi\times x)-\xi\times \nabla_\xi x \\
&=(\nabla_\xi\xi\times x)+(\xi\times \nabla_\xi x)-(\xi\times \nabla_\xi x)=\nabla_\xi\xi\times x,
\end{align*}
that is zero if and only if $\nabla_\xi\xi$ is zero.
\end{proof}

\begin{theorem} \label{theorem4}
 Let $(\phi,\eta,\xi,g)$ be an almost contact metric structure induced by a $G_2$ structure and $v=\sum_{i=1}^6 e_i\times\nabla_{e_i}\xi$. If $g(\xi,v)\neq0$, then $\nabla\Phi$ is not of classes $\mathcal{D}_1,\mathcal{C}_5,\mathcal{C}_7,\mathcal{C}_8,\mathcal{C}_9,\mathcal{C}_{10},\mathcal{C}_{11},\mathcal{C}_{12}$.
\end{theorem}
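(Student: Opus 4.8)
The plan is to single out one quadratic invariant that directly detects $g(\xi,v)$ and that vanishes identically on each of the listed classes, in exact analogy with the roles played by $i_{14}$ and $i_{15}$ in Proposition \ref{proposition2}. The right candidate is $i_{10}$, because its definition $i_{10}(\alpha)=\sum_{i,j}\alpha(e_i,e_i,\xi)\alpha(e_j,e_j,\xi)$ isolates the trace-type quantity $\sum_i(\nabla_{e_i}\Phi)(e_i,\xi)$, and this is precisely what will reproduce $g(\xi,v)$.

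First I would compute $(\nabla_{e_i}\Phi)(e_i,\xi)$ from the identity $(\nabla_x\Phi)(y,z)=g(y,\nabla_x(\xi\times z))+g(\nabla_x z,\xi\times y)$. Setting $x=y=e_i$ and $z=\xi$ and using $\xi\times\xi=0$ annihilates the first term, leaving $(\nabla_{e_i}\Phi)(e_i,\xi)=g(\nabla_{e_i}\xi,\xi\times e_i)$. Summing over $i$ and converting back through $\varphi$ by means of $g(a,b\times c)=\varphi(b,c,a)$, the cyclic symmetry of the fundamental $3$-form gives $\varphi(\xi,e_i,\nabla_{e_i}\xi)=\varphi(e_i,\nabla_{e_i}\xi,\xi)$, so that
$$\sum_i(\nabla_{e_i}\Phi)(e_i,\xi)=\sum_i g(\xi,e_i\times\nabla_{e_i}\xi)=g(\xi,v).$$
Therefore $i_{10}(\nabla\Phi)=\big(g(\xi,v)\big)^2$, and the hypothesis $g(\xi,v)\neq0$ forces $i_{10}(\nabla\Phi)\neq0$.

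Finally I would read off the defining relations from Section 2: for $\mathcal{C}_5,\mathcal{C}_7,\mathcal{C}_8,\mathcal{C}_9,\mathcal{C}_{10}$ the condition $i_{10}=0$ appears explicitly; for $\mathcal{C}_{11}$ and $\mathcal{C}_{12}$ it follows from $i_m=0\ (m\neq5)$ and $i_m=0\ (m\neq16)$ respectively; and for $\mathcal{D}_1$ it follows from $i_m=0\ (m\geq5)$. Since $i_{10}(\nabla\Phi)\neq0$, the tensor $\nabla\Phi$ cannot satisfy the relations of any of these classes, which proves the statement. I do not expect a genuine obstacle beyond the cross-product bookkeeping in the second step; the one point worth flagging is the absence of $\mathcal{C}_6$ from the list, which the computation explains: in $\mathcal{C}_6$ the invariant $i_{10}$ is not set to zero but tied to the norm via $i_{10}=2n\,i_6$, so a nonzero $g(\xi,v)$ is fully compatible with $\mathcal{C}_6$.
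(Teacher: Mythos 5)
Your proposal is correct and follows essentially the same route as the paper: the paper likewise computes $(\nabla_{e_i}\Phi)(e_i,\xi)=g(e_i\times\nabla_{e_i}\xi,\xi)$, concludes $i_{10}(\nabla\Phi)=g(\xi,v)^2\neq0$, and eliminates the listed classes from their defining relations. Your closing remark explaining why $\mathcal{C}_6$ survives (since there $i_{10}$ is tied to $i_6$ rather than forced to vanish) is a useful observation the paper leaves implicit.
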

\begin{proof}
First to compute $i_{10}(\nabla\Phi)$, we write
$$
\begin{array}{rcl}
(\nabla_{e_i}\Phi)(e_i,\xi)&=&g(e_i,\nabla_{e_i}(\xi\times\xi))+g(\nabla_{e_i}\xi,\xi\times e_i)\\
&=&g(e_i\times\nabla_{e_i}\xi,\xi),
\end{array}
$$
and we obtain
$$i_{10}(\nabla\Phi)=\sum_{i,j=1}^6g(e_i\times\nabla_{e_i}\xi,\xi)g(e_j\times\nabla_{e_j}\xi,\xi)=g^2(v,\xi).$$
Assume that $g(\xi,v)\neq0$. Then $i_{10}(\nabla\Phi)=g(\xi,v)^2 \neq0$ and the classes $\mathcal{D}_1,\mathcal{C}_5,\mathcal{C}_7,\mathcal{C}_8,\mathcal{C}_9,\mathcal{C}_{10},\mathcal{C}_{11},\mathcal{C}_{12}$ are eliminated similar to previous proofs.
\end{proof}

\begin{corollary}
 If $g(\xi,v)\neq0$ and $div(\xi)\neq0$, then $\nabla\Phi$ is not an element of the classes $\mathcal{C}_i$, for $i=1,\cdots,12$.
 \end{corollary}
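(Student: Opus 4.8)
The plan is to combine the two preceding theorems, exploiting the fact that each hypothesis, taken on its own, already eliminates all but a single class among $\mathcal{C}_1,\dots,\mathcal{C}_{12}$, and that the two surviving classes are different. No new computation is needed here; the entire argument is bookkeeping of the two lists of excluded classes.

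First I would invoke Theorem \ref{theorem4} under the hypothesis $g(\xi,v)\neq 0$. That theorem excludes $\mathcal{D}_1,\mathcal{C}_5,\mathcal{C}_7,\mathcal{C}_8,\mathcal{C}_9,\mathcal{C}_{10},\mathcal{C}_{11},\mathcal{C}_{12}$ through $i_{10}(\nabla\Phi)=g(\xi,v)^2\neq 0$. Since $\mathcal{D}_1=\mathcal{C}_1\oplus\mathcal{C}_2\oplus\mathcal{C}_3\oplus\mathcal{C}_4$, any $\nabla\Phi$ lying in one of $\mathcal{C}_1,\dots,\mathcal{C}_4$ would also satisfy the defining relation of $\mathcal{D}_1$, so these four classes are ruled out as well. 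The only class among $\mathcal{C}_1,\dots,\mathcal{C}_{12}$ that survives this step is $\mathcal{C}_6$, precisely because its defining relations permit $i_{10}\neq 0$.

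Next I would use the second hypothesis $div(\xi)\neq 0$ to close this single remaining gap. By Proposition \ref{proposition2} (equivalently, by the computation used in Theorem \ref{theorem1}(b)), this gives $i_{14}(\nabla\Phi)=(div(\xi))^2\neq 0$. Since the defining relations of $\mathcal{C}_6$ require $i_{14}=0$, it follows that $\nabla\Phi\notin\mathcal{C}_6$. Combining the two steps, none of the classes $\mathcal{C}_1,\dots,\mathcal{C}_{12}$ can contain $\nabla\Phi$, which is the assertion.

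The step I expect to demand the most care is not a calculation but the verification that the two exclusion lists genuinely exhaust all twelve indices: Theorem \ref{theorem4} leaves exactly one class, $\mathcal{C}_6$, untouched, and one must confirm that this unique survivor is precisely the class killed by $i_{14}(\nabla\Phi)\neq 0$. Once this overlap is pinned down, the corollary follows immediately.
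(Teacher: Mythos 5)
Your proof is correct and matches the paper's (implicit) argument: the corollary is just the combination of Theorem \ref{theorem4}, which via $i_{10}(\nabla\Phi)=g(\xi,v)^2\neq 0$ eliminates every class except $\mathcal{C}_6$, with the computation $i_{14}(\nabla\Phi)=(div(\xi))^2\neq 0$ from Proposition \ref{proposition2} (used in Theorem \ref{theorem1}(b)), which eliminates $\mathcal{C}_6$. Your bookkeeping of the surviving class is accurate, since $\mathcal{C}_6$ is the only one of the twelve whose defining relations allow $i_{10}\neq 0$ while requiring $i_{14}=0$.
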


Next we give examples of a.c.m.s. induced by a parallel $G_2$ structure and a nearly parallel $G_2$ structure, respectively. The a.c.m.s. induced by the parallel $G_2$ structure is in the class $\mathcal{D}_1$, whereas that induced by the nearly parallel $G_2$ structure is almost-K-contact.

\begin{example}Let $(K, g_K)$ be a 4-dimensional K$\ddot{a}$hler manifold with an exact K$\ddot{a}$hler form $\Omega$, i.e. $\Omega=d\lambda$, where $\lambda$ is a 1-form on $K$. Consider $\mathbb{R}^3$ with coordinates $(x_1,x_2,x_3)$ and Euclidean metric $h=dx_1^2+dx_2^2+dx_3^2$. It is known that $(M=\mathbb{R}^3\times K, g=h\times g_K)$ admits a parallel $G_2$ structure
$$\varphi=dx_1\wedge dx_2\wedge dx_3+dx_1\wedge\Omega+dx_2\wedge Re\theta-dx_3\wedge Im\theta,$$
where $\theta$ is a volume form on $K$.

For all $p\in K$, there exist complex coordinates $(z_1, z_2)$ near $p$ such that
$$g_K=|dz_1|^2+|dz_2|^2, \qquad \Omega=\frac{i}{2}(dz_1\wedge d\overline{z}_1+dz_2\wedge d\overline{z}_2), \qquad \theta=dz_1\wedge dz_2$$
at $p$. Setting $z_1=x_4+ix_5$, $z_2=x_6+ix_7$, one has $$g_K=dx_4^2+\ldots+dx_7^2, \qquad \Omega=dx_4\wedge dx_5+dx_6\wedge dx_7,$$
$$ Re\theta=dx_4\wedge dx_6-dx_5\wedge dx_7, \qquad Im\theta=dx_4\wedge dx_7+dx_5\wedge dx_6$$
at $p$. Thus $g=h\times g_K=dx_1^2+\ldots+dx_7^2$ and
$$
\begin{array}{rcl}
\varphi&=&dx_1\wedge dx_2\wedge dx_3+dx_1\wedge( dx_4\wedge dx_5+dx_6\wedge dx_7)\\
&&+dx_2\wedge (dx_4\wedge dx_6-dx_5\wedge dx_7)-dx_3\wedge (dx_4\wedge dx_7+dx_5\wedge dx_6).
\end{array}
$$
(see \cite{JOYCE}).

Consider the a.c.m.s. $(\phi,\xi,\eta,g)$ on $M$ induced by the parallel $G_2$ structure $\varphi$, for $\xi=x_2\partial x_1$ and $\phi(x)=\xi\times x$. For the covariant derivative of the metric on a product manifold, see \cite{BLAIROUBINA}.
$$
\begin{array}{rcl}
(\nabla_{\partial x_2}\Phi)(\partial x_2,\partial x_3)&=&g(\partial x_2,\nabla_{\partial x_2}(x_2\partial x_1\times\partial x_3))+g(\nabla_{\partial x_2}\partial x_3, x_2\partial x_1\times\partial x_2)\\
&=&-g(\partial x_2,\nabla_{\partial x_2}(x_2\partial x_2) )\\
&=&-g(\partial x_2,\partial x_2[x_2]\partial x_2)\\
&=&-1,
\end{array}
$$
and thus the structure is not cosymplectic. We show that this structure is in the class $\mathcal{D}_1$. Since $\nabla_{\xi}\xi=x_2\partial x_1[x_2]\partial x_1=0$, we have
$$
\begin{array}{rcl}
(\nabla_{\xi}\Phi)(y,z)&=&g(y,\nabla_{\xi}(\xi\times z))+g(\nabla_{\xi}z, \xi\times y)\\
&=&g(y,(\nabla_{\xi}\xi)\times z)+g(y,\xi\times\nabla_{\xi}z)+g(\nabla_{\xi}z,\xi\times y)\\
&=&g(y,(\nabla_{\xi}\xi)\times z)\\
&=&0.
\end{array}
$$
In addition, $\nabla_x\xi=\nabla_x{x_2\partial x_1}=x[x_2]\partial x_1+x_2\nabla_x\partial x_1$ for any vector field $x$. It can be seen that $\nabla_x\partial x_1=0$ for any $x\in M=\mathbb{R}^3\times K$ and thus $\nabla_x\xi=x[x_2]\partial x_1$. Then
$$
\begin{array}{rcl}
(\nabla_{x}\Phi)(\xi,y)&=&g(\xi,\nabla_{x}(\xi\times y))+g(\nabla_{x}y, \xi\times \xi)\\
&=&g(\xi,(\nabla_{x}\xi)\times y)+g(\xi,\xi\times\nabla_{x}y)\\
&=&-g((\nabla_{x}\xi)\times \xi,y)\\
&=&-x[x_2]x_2g(\partial x_1\times\partial x_1,y)\\
&=&0.
\end{array}
$$
As a result, $\nabla\Phi\in\mathcal{D}_1$ by the definition of the space $\mathcal{D}_1$.
\end{example}

\begin{example}A Sasakian manifold is a normal contact metric manifold, or equivalently, an almost contact metric structure $(\phi,\xi,\eta,g)$ such that
$$(\nabla_x\phi)(y)=g(x,y)\xi-\eta(y)x,$$
see \cite{BLAIR}.
A 7-dimensional 3-Sasakian manifold is a Riemannian manifold $(M, g)$ equipped with three Sasakian structures $(\phi_i,\xi_i,\eta_i,g)$, $i=1,2,3$ satisfying
$$[\xi_1,\xi_2]=2\xi_3,\ \ [\xi_2,\xi_3]=2\xi_1,\ \ [\xi_3,\xi_1]=2\xi_2$$
 and
$$\phi_3\circ\phi_2=-\phi_1+\eta_2\otimes\eta_3,\ \ \phi_2\circ\phi_3=\phi_1+\eta_3\otimes\eta_2,$$
$$\phi_1\circ\phi_3=-\phi_2+\eta_3\otimes\eta_1,\ \ \phi_3\circ\phi_1=\phi_2+\eta_1\otimes\eta_3,$$
$$\phi_2\circ\phi_1=-\phi_3+\eta_1\otimes\eta_2,\ \ \phi_1\circ\phi_2=\phi_3+\eta_2\otimes\eta_1.$$

There exists a local orthonormal frame $\{e_1, \cdots, e_7\}$ such that $e_1=\xi_1$, $e_2=\xi_2$ and $e_3=\xi_3$. The corresponding coframe via the Riemannian metric
 is denoted by $\{\eta_1, \cdots, \eta_7\}$. The differentials $d\eta_i$, $i=1,2,3$ are
$$d\eta_1=-2(\eta_{23}+\eta_{45}+\eta_{67}), \ \ d\eta_2=2(\eta_{13}-\eta_{46}+\eta_{57}), \ \ d\eta_3=-2(\eta_{12}+\eta_{47}+\eta_{56}).$$
The 3-form
$$\varphi=\frac{1}{2}\eta_1\wedge d\eta_1-\frac{1}{2}\eta_2\wedge d\eta_2-\frac{1}{2}\eta_3\wedge d\eta_3$$
is a
nearly parallel $G_2$ structure (i.e. $d\varphi_i=-4\ast\varphi_i$) on $M$, constructed in \cite{FRD}. For properties and examples of Sasakian and 3-Sasakian manifolds \cite{BOYER} is a good reference.
The brackets of $e_1, \cdots, e_7$ are computed by using the differentials $d\eta_i$ and by the equation $d\eta(e_i,e_j)=-\eta([e_i,e_j])$ for frame elements $e_i, e_j$.
$$[e_1,e_2]=2e_3, \ \ \ \ [e_2,e_3]=2e_1, \ \ \ \ [e_3,e_1]=2e_2,$$
$$[e_4,e_5]=2e_1, \ \ \ \ [e_6,e_7]=2e_1, \ \ \ \ [e_4,e_6]=2e_2,$$
$$[e_5,e_7]=-2e_2, \ \ \ \ [e_4,e_7]=2e_3, \ \ \ \ [e_5,e_6]=2e_3.$$
By the Kozsul formula we obtain $\nabla_{e_i}e_i=0$ and
$$\nabla_{e_1}e_2=e_3, \nabla_{e_1}e_3=-e_2, \nabla_{e_1}e_4=-e_5, \nabla_{e_1}e_5=e_4, \nabla_{e_1}e_6=-e_7, \nabla_{e_1}e_7=e_6,$$
$$\nabla_{e_2}e_1=-e_3,\nabla_{e_2}e_3=e_1, \nabla_{e_2}e_4=-e_6, \nabla_{e_2}e_5=e_7, \nabla_{e_2}e_6=e_4, \nabla_{e_2}e_7=-e_5,$$
$$\nabla_{e_3}e_1=e_2, \nabla_{e_3}e_2=-e_1,\nabla_{e_3}e_4=-e_7, \nabla_{e_3}e_5=-e_6, \nabla_{e_3}e_6=e_5, \nabla_{e_3}e_7=e_4,$$
$$\nabla_{e_4}e_1=-e_5, \nabla_{e_4}e_2=-e_6, \nabla_{e_4}e_3=-e_7, \nabla_{e_4}e_5=e_1, \nabla_{e_4}e_6=e_2, \nabla_{e_4}e_7=e_3,$$
$$\nabla_{e_5}e_1=e_4, \nabla_{e_5}e_2=e_7, \nabla_{e_5}e_3=-e_6, \nabla_{e_5}e_4=-e_1, \nabla_{e_5}e_6=e_3, \nabla_{e_5}e_7=-e_2,$$
$$\nabla_{e_6}e_1=-e_7, \nabla_{e_6}e_2=e_4, \nabla_{e_6}e_3=e_5, \nabla_{e_6}e_4=-e_2, \nabla_{e_6}e_5=-e_3, \nabla_{e_6}e_7=e_1,$$
$$\nabla_{e_7}e_1=e_6, \nabla_{e_7}e_2=-e_5, \nabla_{e_7}e_3=e_4, \nabla_{e_7}e_4=-e_3, \nabla_{e_7}e_5=e_2, \nabla_{e_7}e_6=-e_1.$$
By the local expression of
$$
\begin{array}{rcl}
\varphi&=&\frac{1}{2}\eta_1\wedge d\eta_1-\frac{1}{2}\eta_2\wedge d\eta_2-\frac{1}{2}\eta_3\wedge d\eta_3\\
&=&\eta_{123}-\eta_{145}-\eta_{167}+\eta_{246}-\eta_{257}+\eta_{347}
+\eta_{356},
\end{array}
$$
the 2-fold vector cross products of frame elements are
$$e_1\times e_2=e_3, e_1\times e_3=-e_2, e_1\times e_4=-e_5, e_1\times e_5=e_4, e_1\times e_6=-e_7, e_1\times e_7=e_6,$$
$$e_2\times e_3=e_1, e_2\times e_4=e_6, e_2\times e_5=-e_7, e_2\times e_6=-e_4, e_2\times e_7=e_5,$$
$$e_3\times e_4=e_7, e_3\times e_5=e_6, e_3\times e_6=-e_5, e_3\times e_7=-e_4,$$
$$e_4\times e_5=-e_1, e_4\times e_6=e_2, e_4\times e_7=e_3, e_5\times e_6=e_3, e_5\times e_7=-e_2, e_6\times e_7=-e_1.$$
Consider the a.c.m.s. $(\phi,\xi,\eta,g)$ on $M$ induced by the 2-fold vector cross product of the nearly parallel $G_2$ structure $\varphi$, where
$\xi=e_1=\xi_1$, $\eta=d\eta_1$ and $\phi(x)=\xi\times x$. First, since
$$(\nabla_x\Phi)(y,z)=g(y,\nabla_x(e_1\times z))+g(\nabla_xz,e_1\times y),$$
we have $$(\nabla_{e_2}\Phi)(e_1,e_2)=1\neq 0$$
and thus the a.c.m.s. is not cosymplectic. In addition,
$$\nabla_{e_2}\Phi(\xi,e_2)=\nabla_{e_2}\Phi(e_1,e_2)=1,$$
implying that $\nabla\Phi\notin \mathcal{D}_1=\mathcal{C}_1\oplus\mathcal{C}_2\oplus\mathcal{C}_3\oplus\mathcal{C}_4.$
On the other hand
$$
\begin{array}{rcl}
(\nabla_{\xi}\Phi)(x,y)&=&g(x,\nabla_{e_1}(e_1\times y))+g(\nabla_{e_1}y, e_1\times x)\\
&=&g(x,(\nabla_{e_1}e_1)\times y)+g(x,e_1\times\nabla_{e_1}y)+g(\nabla_{e_1}y,e_1\times x)\\
&=&\varphi(e_1,\nabla_{e_1}y,x)+\varphi(e_1,x,\nabla_{e_1}y)\\
&=&0,
\end{array}$$
which gives that the a.c.m.s. is almost-K-contact, that is, an element of the class $$\mathcal{D}_1\oplus\mathcal{C}_5\oplus\mathcal{C}_6\oplus\mathcal{C}_7\oplus\mathcal{C}_8\oplus\mathcal{C}_9\oplus\mathcal{C}_{10}.$$
Moreover,
$$\delta\Phi(e_1)=-\sum_{i=2}^7(\nabla_{e_i}\Phi)(e_i,e_1)-(\nabla_{e_1}\Phi)(e_1,e_1)=-2$$
yields the a.c.m.s. is not semi-cosymplectic ($\mathcal{C}_1\oplus\mathcal{C}_2\oplus\mathcal{C}_3\oplus\mathcal{C}_7\oplus\mathcal{C}_8\oplus\mathcal{C}_9\oplus\mathcal{C}_{10}\oplus\mathcal{C}_{11}$).
The a.c.m.s. is not trans-Sasakian ($\mathcal{C}_5\oplus\mathcal{C}_6$):
$$(\nabla_{e_2}\Phi)(e_1,e_2)=1,$$
whereas $$\frac{1}{3}\{g(e_2,e_1)\eta(e_2)-g(e_2,e_2)\eta(e_1)\}=-\frac{1}{3},$$
i.e.,
the defining condition of being trans-Sasakian is not satisfied. In particular, the a.c.m.s. is not $\alpha$-Sasakian or $\alpha$-Kenmotsu. Note that we started with a Sasakian structure on a manifold, then we used the 2-fold vector cross product of the nearly parallel $G_2$ structure $\varphi$, however the induced a.c.m.s. is not Sasakian.

In fact, consider the a.c.m.s. induced by $\varphi$, where $\xi=ae_1+be_2+ce_3$ for constants $a, b, c$. Assume that this structure is Sasakian. Then we have
$$(\nabla_x\phi)(y)=g(x,y)\xi-\eta(y)x$$
for all vector fields $x, y$. For $x=e_1$, $y=e_2$, we get $b=\pm\frac{1}{3}$. If $x=e_1$, $y=e_4$, we obtain $b=0$. Thus the a.c.m.s. is not Sasakian.
Note that since $(\nabla_{e_1}\Phi)(e_1,e_2)=b$, $(\nabla_{e_1}\Phi)(e_1,e_3)=c$ and $(\nabla_{e_2}\Phi)(e_2,e_1)=a$, $\nabla\Phi\neq 0$ unless $\xi$ is zero. Thus the a.c.m.s. is not cosymplectic.

By direct calculation, it can be seen that for $\xi=ae_1+be_2+ce_3$, one has $\nabla_{\xi}\xi=0$. By Theorem [\ref{theorem3}], the a.c.m.s. is almost-K-contact.
\end{example}

\end{document}